\newtheorem{thm}{Theorem}[section]
\newtheorem{lem}[thm]{Lemma}
\numberwithin{equation}{section}
\def\pn{\par\noindent}
\begin{document}

Advances in Geometry, 2016, Vol. 16, No. 3, 329--335

\vskip 1 true cm

\title{\bf Inequalities for Casorati curvatures of submanifolds in real space forms}
\author{Pan Zhang, Liang Zhang$^{\ast}$}

\thanks{{\scriptsize
\newline $^{\ast}$Corresponding author\\
\hskip -0.4 true cm \textit{2010 Mathematics Subject Classification.} 53C40, 49K35.
\newline \textit{Key words and phrases.} inequalities, Casorati curvatures, real space form
\newline This paper was supported by the Foundation for Excellent Young Talents of Higher
Education of Anhui Province (Grant NO.2011SQRL021ZD)
}}

\maketitle

\begin{abstract}
 By using T. Oprea's optimization methods on submanifolds, we give another proof of the inequalities relating the normalized $\delta-$Casorati curvature $\hat{\delta}_c(n-1)$ for submanifolds in real space forms. Also, inequalities relating the normalized $\delta-$Casorati curvature $\delta_C(n-1)$ for submanifolds in real space forms are obtained. Besides, we characterize a kind of Casorati ideal hypersurface of Euclidean 4-space. We also show that this kind of Casorati ideal hypersurface is rigid.
\end{abstract}

\vskip 0.2 true cm


\pagestyle{myheadings}
\markboth{\rightline {\scriptsize P. Zhang, L. Zhang}}
         {\leftline{\scriptsize Inequalities for Casorati curvatures of submanifolds in real space forms}}

\bigskip
\bigskip


\section{ Introduction}
\vskip 0.4 true cm

In the theory of submanifolds, the following problem is fundamental: {\it to establish simple relationships between the main intrinsic invariants and the main
extrinsic invariants of the submanifolds} \cite{C1}. The basic relationships discovered until now are inequalities. Recently the study of this topic has attracted a lot of attention \cite{AM,CBYDF,BYH,CBYDFVL,BY4,LYL,FR,P3,PLW,UMW}.

On the other hand, it is well-known that the Casorati curvature of a submanifold in a Riemannian manifold is an extrinsic invariant defined as the normalized square of the length of the second fundamental form and it was preferred by Casorati over the traditional Gauss curvature because corresponds better with the common intuition of curvature \cite{CAS}. Later, Decu, Haesen and Verstraelen introduced the normalized $\delta-$Casorati curvatures $\delta_c(n-1)$ and $\hat{\delta}_c(n-1)$ and established inequalities involving $\delta_c(n-1)$ and $\hat{\delta}_c(n-1)$ for submanifolds in real space forms \cite{DS1}.

In \cite{DS1},  Decu {\it et al.} proved the following.

\begin{thm} (\cite{DS1}, Theorem 1) Let $M^n$ be an $n-$dimensional submanifold in an $(n+p)-$dimensional real space form $N^{n+p}(\widetilde{c})$ of constant sectional curvature $\widetilde{c}$, then we have
\begin{equation} \label{eq1.1}
\rho\leq \hat{\delta}_c(n-1)+\widetilde{c},
\end{equation}
where $\rho$ is the normalized scalar curvature of $M$. Moreover, the equality case holds if and only if $M^n$ is an invariantly quasi-umbilical submanifold with trivial normal connection in $N^{n+p}(\widetilde{c})$, such that with respect to suitable frames, the shape operators $A_r=A_{e_r}$, $r\in\{n+1,\cdots,n+p\}$, take the following forms:
$$A_{n+1}=\left(
  \begin{array}{cccccc}
   2\lambda  & 0 & 0 & \cdots & 0 & 0\\
   0  & 2\lambda & 0 & \cdots & 0 & 0\\
   0  & 0 & 2\lambda & \cdots & 0 & 0\\
    \vdots & \vdots &\vdots& \ddots &  \vdots &\vdots \\
    0&0&0&\cdots&2\lambda&0\\
    0&0 &0& \cdots &0&\lambda \\
  \end{array}
\right),\ \
A_{n+2}=\cdots=A_{n+p}=0.
$$
\end{thm}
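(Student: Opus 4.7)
The plan is to adapt T. Oprea's constrained-optimization approach on Riemannian submanifolds. At any point $p\in M^n$ I fix orthonormal frames $\{e_1,\dots,e_n\}$ of $T_pM$ and $\{e_{n+1},\dots,e_{n+p}\}$ of $T_p^{\perp}M$, and set $L=\mathrm{span}\{e_1,\dots,e_{n-1}\}$. Writing $h_{ij}^r=g(h(e_i,e_j),e_r)$ for the components of the second fundamental form, the Gauss equation gives $2\tau=n(n-1)\widetilde{c}+n^2\|H\|^2-\|h\|^2$, and expanding the definitions of $\mathcal{C}$ and $\mathcal{C}(L)$ in these coordinates shows that the target inequality $\rho\le\hat{\delta}_c(n-1)+\widetilde{c}$ is equivalent to
\[
\sum_{r=n+1}^{n+p}P_r\;\ge\;0,\qquad P_r:=\sum_{i,j=1}^{n-1}(h_{ij}^r)^2+4\sum_{i=1}^{n-1}(h_{in}^r)^2+2(h_{nn}^r)^2-\frac{2}{2n-1}\Bigl(\sum_{i=1}^n h_{ii}^r\Bigr)^2.
\]

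Fix a normal direction $e_r$. The off-diagonal components $h_{ij}^r$ ($i\neq j$) appear in $P_r$ only through non-negative squared contributions, so the nontrivial step is to show non-negativity of the diagonal quadratic form
\[
Q_r(a_1,\dots,a_n):=\sum_{i=1}^{n-1}a_i^2+2a_n^2-\frac{2}{2n-1}\Bigl(\sum_{i=1}^n a_i\Bigr)^2,\qquad a_i:=h_{ii}^r.
\]
This is exactly where Oprea's method enters: I would minimise $f(a)=\sum_{i=1}^{n-1}a_i^2+2a_n^2$ subject to the linear constraint $\sum_i a_i=t$ (i.e.\ fixing the trace of $A_r$). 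The Lagrange condition $\nabla f=\mu\,(1,\dots,1)$ forces $a_1=\cdots=a_{n-1}=\mu/2$ and $a_n=\mu/4$; reinserting into the constraint pins down $\mu=4t/(2n-1)$, so the minimum of $f$ on the constraint surface is $2t^2/(2n-1)$, which is precisely the subtrahend in $Q_r$. Hence $Q_r\ge 0$, with equality iff $a_1=\cdots=a_{n-1}=2a_n$. Summing over $r$ yields the desired inequality; since the argument holds for \emph{every} hyperplane, one chooses the frame so that $L$ realises the supremum entering $\hat{\delta}_c(n-1)$.

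For the equality case, I would trace these rigidity conditions backwards. Equality in each $P_r\ge 0$ forces all off-diagonal components $h_{ij}^r$ ($i\ne j$) to vanish, so every shape operator $A_r$ is diagonal in $\{e_1,\dots,e_n\}$, and the Lagrange condition then forces $A_r=\lambda_r\,\mathrm{diag}(2,\dots,2,1)$ for some scalar $\lambda_r$. A single orthogonal rotation in the normal bundle---sending $\tilde{e}_{n+1}$ to the unit vector in the direction $\sum_r\lambda_r e_r$---concentrates the entire shape tensor onto one normal direction, giving $A_{n+1}$ the stated form with $\lambda=\sqrt{\sum_r\lambda_r^2}$ and $A_{n+2}=\cdots=A_{n+p}=0$; the same rotation realises the trivial normal connection. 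The main obstacle I anticipate is this final equality analysis: one must verify that the supremum-achieving hyperplane $L$ is consistent across all normal directions, and argue that the pointwise normal rotation can be carried out compatibly so as to yield the invariantly quasi-umbilical structure together with the trivial-normal-connection conclusion.
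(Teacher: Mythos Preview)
Your proposal is correct and follows essentially the same route as the paper: both reduce the inequality to showing a per-normal-direction quadratic form in the diagonal entries $h_{ii}^r$ is nonnegative by solving the constrained minimization problem with fixed trace, obtaining the same critical point $(2t,\dots,2t,t)$ and the same equality characterization. The only cosmetic difference is that you absorb the cross terms $-2\sum_{i<j}h_{ii}^rh_{jj}^r$ into $(\sum_i h_{ii}^r)^2$ up front, leaving the manifestly positive-definite objective $\sum_{i<n}a_i^2+2a_n^2$, whereas the paper keeps the cross terms and then invokes Oprea's Lemma~2.1 with an explicit Hessian check to certify that the critical point is a global minimum.
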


The proof of Theorem 1.1 is based on an optimization procedure by showing that the quadratic polynomial in the components of the second fundamental form is parabolic. And the above approach was successfully applied to establish inequalities involving Casorati curvatures for different submanifolds in various ambient spaces  \cite{JGE,DS2,VG,VBGE}. In this paper, our main purpose is to present a new approach to establish inequalities for the Casorati curvatures. In Section 3, we give another proof of Theorem 1.1 by using T. Oprea's optimization methods on Riemannian submanifolds \cite{O1}. In Sections 4 and 5, we not only establish an inequality in terms of the normalized $\delta-$Casorati curvature $\delta_C(n-1)$ for submanifolds in real space forms but also characterize several kinds of Casorati ideal submanifolds ("ideal" in the sense of Bang Yen Chen).


\section{ Preliminaries}
\vskip 0.4 true cm

Let $(M^n,g)$ be an $n-$dimensional submanifold in an $(n+p)-$dimensional real space form $(N^{n+p}(\widetilde{c}),\overline{g})$ of constant sectional curvature $\widetilde{c}$. The Levi-Civita connections on $N$ and $M$ will be denoted by $\overline{\nabla}$ and $\nabla$, respectively. For all $X,Y\in C^{\infty}(TM),\ \  Z\in C^{\infty}(T^{\bot}M)$, the Gauss and Weingarten formulas can be expressed by
$$\overline{\nabla}_XY=\nabla_XY+h(X,Y), \ \ \ \overline{\nabla}_XZ=-A_ZX+\nabla_X^{\bot}Z, $$
where $h$ is the second fundamental form of $M$, $\overline{\nabla}$ is the normal connection and the shape operator $A_Z$ of $M$ is given by
$$g(A_ZX,Y)=\overline{g}(h(X,Y),Z).$$

We denote by $\overline{R}$ and $R$ the curvature tensors associated to $\overline{\nabla}$ and $\nabla$, then the Gauss equation is given by
\begin{equation}\label{eq2.1}
R(X,Y,Z,W)=\overline{R}(X,Y,Z,W)+g(h(X,Z),h(Y,W))-g(h(X,W),h(Y,Z)).
\end{equation}

In $N^{n+p}$ we choose a local orthonormal frame $e_1,\cdots,e_n,e_{n+1},\cdots,e_{n+p}$, such that, restricting to $M^n$, $e_1,\cdots,e_n$ are tangent to $M^n$.  We write $h_{ij}^{r}=g(h(e_i,e_j),e_{r})$. Then the \textit{mean curvature vector} $H$ is given by $$H=\sum\limits_{r=n+1}^{n+p}(\frac{1}{n}\sum\limits_{i=1}^nh_{ii}^{r})e_{r}$$
and the squared norm of $h$ over dimension $n$ is denoted by  $\mathcal{C}$ and is called the Casorati curvature of the submanifold $M$. Therefore we have
$$\mathcal{C}=\frac{1}{n}\sum_{r=n+1}^{n+p}\sum_{i,j=1}^n(h_{ij}^r)^2.$$

The submanifold $M^n$ is called to be \textit{totally geodesic} if $h=0$ and {\it minimal} if $H=0$. Besides, $M^n$ is called \textit{invariantly quasi-umbilical} if there exist $p$ mutually orthogonal unit normal vectors $\xi_{n+1}, \cdots, \xi_{n+p}$ such that the shape operators with respect to all
directions $\xi_{r}$ have an eigenvalue of multiplicity $n-1$ and that for each $\xi_{r}$ the distinguished
eigendirection is the same \cite{JGE}.

Let $K(e_i\wedge e_j),\ 1\leq i<j\leq n$, denote the \textit{sectional curvature} of the plane section spanned by $e_i$ and $e_j$. Then the \textit{scalar curvature} of $M^n$ is given by
$$\tau=\sum_{i<j}K(e_i\wedge e_j),$$
and the normalized scalar curvature $\rho$ is defined by
$$\rho=\frac{2\tau}{n(n-1)}.$$

Suppose $L$ is an $l$-dimensional subspace of $T_xM$,\ $x\in M$, \ $l\geq2$ and $\{e_1,\cdots,e_l\}$ an orthonormal basis of $L$. Then the \textit{scalar curvature $\tau(L)$} of the $l-$plane $L$ is given by
$$
\tau(L)=\sum_{1\leq \mu<\nu\leq l}K(e_{\mu}\wedge e_{\nu}),
$$
and the Casorati curvature $\mathcal{C}(L)$ of the subspace $L$ is defined as
$$\mathcal{C}(L)=\frac{1}{r}\sum_{r=n+1}^{n+p}\sum_{i,j=1}^l(h_{ij}^r)^2.$$

Following \cite{DS1,VG}, we can define {\it the normalized $\delta-$Casorati curvature $\delta_c(n-1)$ and $\hat{\delta}_c(n-1)$} by
\begin{equation}\label{eq2.2}
[\delta_c(n-1)]_x=\frac{1}{2}\mathcal{C}_x+\frac{n+1}{2n(n-1)}\textmd{inf}\big\{\mathcal{C}(L)\mid L \ \ \textmd{a hyperplane of} \ \ T_xM \big\},
\end{equation}
and
$$[\hat{\delta}_c(n-1)]_x=2\mathcal{C}_x-\frac{2n-1}{2n}\textmd{sup}\big\{\mathcal{C}(L)\mid L \ \ \textmd{a hyperplane of}\ \ T_xM \big\}.$$

\vskip 0.25 true cm
 \vskip 0mm \noindent{{\bf Remark 2.1}}\ \ {\it It should be noted that the normalized $\delta-$Casorati curvatures vanish trivially for $n=2$.}
\vskip 0.25 true cm

For later use, we provide a brief review of T. Oprea's optimization methods on submanifolds from \cite{O1}.

Let $(N_2, \overline{g})$ be a Riemannian manifold, $N_1$ be a Riemannian
submanifold of it, $g$ be the metric induced on $N_1$ by $\overline{g}$ and $f:N_2\longrightarrow \mathbb{R}$ be a
differentiable function.

Following \cite{O1} we considered the constrained extremum problem
\begin{equation}\label{eq2.3}
\min_{x\in N_1}f(x),
\end{equation}
then we have

\begin{lem} (\cite{O1}) If $x_0\in N_1$ is the solution of the problem \eqref{eq2.3}, then

  $i)(gradf)(x_0)\in T_{x_{0}}^\perp N_1;$

 $ii)$ the bilinear form
$$\mathcal{A}:T_{x_{0}}N_1\times T_{x_{0}}N_1\rightarrow \mathbb{R},
$$
$$\mathcal{A}(X, Y)=Hess_f(X, Y)+\overline{g}(h(X, Y), (gradf)(x_0)),
$$
is positive semidefinite, where $h$ is the second fundamental form of $N_1$ in $N_2$ and $gradf$ is the gradient of function $f$.
\end{lem}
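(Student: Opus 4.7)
The plan is to prove Lemma 2.1 as the standard first- and second-order necessary conditions for a constrained minimum, with the Gauss formula converting the ambient covariant derivative into the tangential and normal parts that produce the bilinear form $\mathcal{A}$.

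For part (i), I would invoke the first-order necessary condition. Fix $X \in T_{x_0}N_1$ and choose a smooth curve $\gamma \colon (-\varepsilon,\varepsilon) \to N_1$ with $\gamma(0)=x_0$ and $\gamma'(0)=X$. Since $x_0$ minimizes $f|_{N_1}$, the real-valued function $t \mapsto f(\gamma(t))$ has a minimum at $t=0$, so
\[
0 = \frac{d}{dt}\Big|_{t=0} f(\gamma(t)) = df_{x_0}(X) = \overline{g}\bigl((\mathrm{grad}\,f)(x_0),\, X\bigr).
\]
As $X \in T_{x_0}N_1$ was arbitrary, $(\mathrm{grad}\,f)(x_0)$ is $\overline{g}$-orthogonal to $T_{x_0}N_1$, which is exactly (i).

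For part (ii), I would refine the argument by choosing $\gamma$ to be a geodesic of the \emph{submanifold} $N_1$ with $\gamma(0)=x_0$, $\gamma'(0)=X$, so that $\nabla_{\gamma'}\gamma'=0$ in $N_1$. The second-order necessary condition reads $\frac{d^2}{dt^2}\big|_{t=0} f(\gamma(t)) \geq 0$. Writing $V=\gamma'$ and differentiating $\tfrac{d}{dt}(f\circ\gamma)=\overline{g}(\mathrm{grad}\,f, V)$ once more,
\[
\frac{d^2}{dt^2}(f\circ\gamma) = \overline{g}\bigl(\overline{\nabla}_{V}\mathrm{grad}\,f,\, V\bigr) + \overline{g}\bigl(\mathrm{grad}\,f,\, \overline{\nabla}_{V}V\bigr).
\]
The first term is $\mathrm{Hess}_f(V,V)$ by definition of the Hessian on $N_2$. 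For the second term, the Gauss formula gives $\overline{\nabla}_{V}V = \nabla_{V}V + h(V,V) = h(V,V)$ since $\gamma$ is a geodesic of $N_1$. Evaluating at $t=0$, this yields
\[
0 \leq \mathrm{Hess}_f(X,X) + \overline{g}\bigl(h(X,X),\, (\mathrm{grad}\,f)(x_0)\bigr) = \mathcal{A}(X,X).
\]
Since both $\mathrm{Hess}_f$ and $h$ are symmetric, $\mathcal{A}$ is a symmetric bilinear form; together with $\mathcal{A}(X,X) \geq 0$ for every $X \in T_{x_0}N_1$, this is exactly the assertion that $\mathcal{A}$ is positive semidefinite.

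There is no real obstacle here; the only point requiring care is the choice of curve in the second-order step. If one used an ambient geodesic of $N_2$ instead of a geodesic of $N_1$, the term $\overline{\nabla}_V V$ would vanish and the extrinsic correction $\overline{g}(h(X,X),(\mathrm{grad}\,f)(x_0))$ would be lost; taking $\gamma$ to be an $N_1$-geodesic is precisely what makes the second fundamental form appear and converts the second variation into the intended form $\mathcal{A}$.
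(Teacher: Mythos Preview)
Your proof is correct and is the standard argument: the first-order condition from varying along any curve in $N_1$, then the second-order condition along an $N_1$-geodesic, with the Gauss formula producing the $h$-term. Note, however, that the paper does not supply its own proof of this lemma; it is quoted from \cite{O1} and used as a black box in Section~3, so there is no in-paper argument to compare against.
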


In \cite{O2}, the above lemma was successfully applied to improve an inequality relating $\delta(2)$ obtained in \cite{C2}. Later, Chen extended the improved inequality to the general inequalities involving $\delta(n_1,\cdots,n_k)$ \cite{CBYDF}. More details of $\delta-$invariants $\delta(n_1,\cdots,n_k)$ can be found in \cite{BY5,PLW}.


\section{ Another proof of Theorem 1.1}
\vskip 0.4 true cm

From \eqref{eq2.1} we have
$$R_{ijij}=\widetilde{c}+\sum_{r=n+1}^{n+p}[h_{ii}^{r}h_{jj}^{r}-(h_{ij}^{r})^2],$$
which implies
\begin{equation} \label{eq3.1}
2\tau=n^2\parallel H\parallel^2-n\mathcal{C}+n(n-1)\widetilde{c}.
\end{equation}
Consider the following function $\mathcal{P}$ which is a quadratic polynomial in the components of the second fundamental form:
\begin{equation} \label{eq3.2}
\mathcal{P}=2n(n-1)\mathcal{C}+\frac{(n-1)(1-2n)}{2}\mathcal{C}(L)-2\tau+n(n-1)\widetilde{c}.
\end{equation}
Assuming, without loss of generality, that $L$ is spanned by $e_1,e_2,\cdots,e_{n-1}$, combining \eqref{eq3.1} it follows that
\begin{align}
\mathcal{P}&=\sum_{r=n+1}^{n+p}\{\frac{2n-3}{2}\sum_{i=1}^{n-1}(h_{ii}^r)^2+2(n-1)(h_{nn}^r)^2+(2n-1)\sum_{1\leq i<j\leq n-1}(h_{ij}^r)^2\notag\\
&~~~~~~~~+2(2n-1)\sum_{i=1}^{n-1}(h_{in}^r)^2-2\sum_{1\leq i<j\leq n}h_{ii}^rh_{jj}^r\}\notag\\
&\geq \sum_{r=n+1}^{n+p}\{\frac{2n-3}{2}\sum_{i=1}^{n-1}(h_{ii}^r)^2+2(n-1)(h_{nn}^r)^2-2\sum_{1\leq i<j\leq n}h_{ii}^rh_{jj}^r\}.\notag
\end{align}

For $r=n+1,\cdots,n+p$, let us consider the quadratic forms
$$f_{r}: \mathbb{R}^n\longrightarrow \mathbb{R},$$
$$f_{r}(h_{11}^{r},\cdots,h_{nn}^{r})=\frac{2n-3}{2}\sum_{i=1}^{n-1}(h_{ii}^r)^2+2(n-1)(h_{nn}^r)^2-2\sum_{1\leq i<j\leq n}h_{ii}^rh_{jj}^r$$
and the constrained extremum problem
$$\min f_{r},$$
$$\textmd{subject}\ \ \textmd{to}\ \ \digamma: h_{11}^{r}+\cdots+h_{nn}^{r}=k^{r},$$
where $k^{r}$ are real constants. In fact, for a fixed normal vector $e_r$, $k^r$ is the trace of the matrix $(h^r_{ij})$, which is invariant no matter how the entries $h^r_{ij}$ change.

The  partial derivatives of the function $f_{r}$ are
\begin{equation} \label{eq3.3}
\frac{\partial f_{r}}{\partial h_{11}^{r}}=(2n-3)h_{11}^r-2\sum_{i=2}^nh_{ii}^r
\end{equation}
\begin{equation} \label{eq3.4}
\frac{\partial f_{r}}{\partial h_{22}^{r}}=(2n-3)h_{22}^r-2h_{11}^r-2\sum_{i=3}^nh_{ii}^r
\end{equation}
$$\cdots\cdots$$
\begin{equation} \label{eq3.5}
\frac{\partial f_{r}}{\partial h_{n-1,n-1}^{r}}=(2n-3)h_{n-1,n-1}^r-2h_{nn}^r-2\sum_{i=1}^{n-2}h_{ii}^r
\end{equation}
\begin{equation} \label{eq3.6}
\frac{\partial f_{r}}{\partial h_{nn}^{r}}=4(n-1)h_{nn}^r-2\sum_{i=1}^{n-1}h_{ii}^r.
\end{equation}

For an optimal solution $(h_{11}^{r},h_{22}^{r},\cdots,h_{nn}^{r})$ of the problem in question, the vector $\textmd{grad}f_{r}$ is normal at $\digamma$ that is, it is colinear with the vector $(1, 1, ..., 1)$. From \eqref{eq3.3}, \eqref{eq3.4}, \eqref{eq3.5} and \eqref{eq3.6}, it follows that a critical point of the considered problem has the form
\begin{equation} \label{eq3.7}
(h_{11}^{r},h_{22}^{r},\cdots,h_{n-1,n-1}^{r},h_{nn}^{r})=(2t^{r},2t^{r},\cdots,2t^{r},t^{r}).
\end{equation}
As $\sum\limits_{i=1}^nh_{ii}^{r}=k^{r}$, by using \eqref{eq3.7}, we have
\begin{equation} \label{eq3.8}
h_{11}^{r}=h_{22}^{r}=\cdots=h_{n-1,n-1}^{r}=\frac{2}{2n-1}k^{r},\ \  h_{nn}^{r}=\frac{1}{2n-1}k^{r}.
\end{equation}

We fix an arbitrary point $x \in \digamma$. The bilinear form $\mathcal{A}: T_x\digamma \times T_x\digamma\longrightarrow  \mathbb{R}$ has the expression
$$\mathcal{A}(X,Y) = \textmd{Hess}f_{r}(X,Y)+\langle h'(X,Y),(\textmd{grad}f_{r})(x)\rangle,$$
where $h'$ is the second fundamental form of $\digamma$ in $\mathbb{R}^n$ and $\langle , \rangle$ is the standard inner-product on $\mathbb{R}^n$.
In the standard frame of $\mathbb{R}^n$, the Hessian of $f_{r}$ has the matrix
$$
\left(
  \begin{array}{cccccc}
   2n-3  & -2 & -2 & \cdots & -2 & -2\\
   -2 & 2n-3 & -2 & \cdots & -2 & -2\\
   -2  & -2 & 2n-3 & \cdots & -2 & -2\\
    \vdots & \vdots &\vdots& \ddots &  \vdots &\vdots \\
    -2&-2&-2&\cdots&2n-3&-2\\
    -2&-2 &-2& \cdots &-2&4(n-1) \\
  \end{array}
\right).
$$
As $\digamma$ is totally geodesic in $\mathbb{R}^n$ , considering a vector $X$ is tangent to $\digamma$ at the arbitrary point $x$ on $\digamma$, that is, verifying the relation
$\sum\limits_{i=1}^nX_i=0$, we have
\begin{align}
\mathcal{A}(X,X)&=(2n-1)\sum_{i=1}^{n-1}X_i^2+(4n-2)X_n^2-2(X_1+X_2+\cdots+X_n)^2\notag\\
&=(2n-1)\sum_{i=1}^{n-1}X_i^2+(4n-2)X_n^2\notag\\
&\geq 0.\notag
\end{align}
Consequently the point $(h_{11}^{r},h_{22}^{r},\cdots,h_{nn}^{r})$ given by \eqref{eq3.8} is a global minimum point, here we used Lemma 2.1.
Inserting \eqref{eq3.8} into $f_r$ we have $f_r=0$. Hence we have
\begin{equation} \label{eq3.9}
\mathcal{P}\geq 0.
\end{equation}

From \eqref{eq3.2} and \eqref{eq3.9} we can derive inequality \eqref{eq1.1}. The equality case of \eqref{eq1.1} holds if and only if we have the equality
in all the previous inequalities. Thus the shaper operators take the desired forms. Besides, $h_{ij}^r=0, \ \ i\neq j,\ \forall i,j,r$ means that the normal connection $\nabla^{\perp}$ is flat, or still, that the normal curvature tensor $R^{\perp}$, i.e., the curvature tensor of the normal connection is trivial.


\section{ Inequalities for the modified normalized $\delta-$Casorati curvature}
\vskip 0.4 true cm

It was pointed that the coefficient $\frac{n+1}{2n(n-1)}$ in \eqref{eq2.2} is inappropriate and must be replaced by $\frac{n+1}{2n}$ \cite{JGE}.  Because the normalized $\delta-$Casorati curvature $\delta_C(n-1)$ should be able to be recovered from the generalized normalized $\delta-$Casorati curvature, it would be more appropriate to define
$\delta_C(n-1)$ using the coefficient $\frac{n+1}{2n}$. More details of generalized normalized $\delta-$Casorati curvature can be found in \cite{JGE,DS2}. Following \cite{JGE}, we define the normalized $\delta-$Casorati curvature $\delta_C(n-1)$ by
$$[\delta_C(n-1)]_x=\frac{1}{2}\mathcal{C}_x+\frac{n+1}{2n}\textmd{inf}\big\{\mathcal{C}(L)\mid L \ \ \textmd{a hyperplane of} \ \ T_xM \big\}.$$

Using T. Oprea's optimization methods on Riemannian submanifolds and considering the following quadratic polynomial in the components of the second fundamental form:
$$\mathcal{Q}=\frac{1}{2}n(n-1)\mathcal{C}+\frac{1}{2}(n-1)(n+1)\mathcal{C}(L)-2\tau+n(n-1)\widetilde{c},$$
 we establish the following inequalities in terms of $\delta_C(n-1)$ for submanifolds of a real space form.

\begin{thm} Let $M^n$ be an $n-$dimensional submanifold in an $(n+p)-$dimensional real space form $N^{n+p}(\widetilde{c})$ of constant sectional curvature $\widetilde{c}$, then we have
\begin{equation} \label{eq4.1}
\rho\leq \delta_C(n-1)+\widetilde{c},
\end{equation}
where $\rho$ is the normalized scalar curvature of $M$. Moreover, the equality case holds if and only if $M^n$ is an invariantly quasi-umbilical submanifold with trivial normal connection in $N^{n+p}(\widetilde{c})$, such that with respect to suitable frames, the shape operators $A_r=A_{e_r}$, $r\in\{n+1,\cdots,n+p\}$, take the following forms:
$$A_{n+1}=\left(
  \begin{array}{cccccc}
   \lambda  & 0 & 0 & \cdots & 0 & 0\\
   0  & \lambda & 0 & \cdots & 0 & 0\\
   0  & 0 & \lambda & \cdots & 0 & 0\\
    \vdots & \vdots &\vdots& \ddots &  \vdots &\vdots \\
    0&0&0&\cdots&\lambda&0\\
    0&0 &0& \cdots &0&2\lambda \\
  \end{array}
\right),\ \
A_{n+2}=\cdots=A_{n+p}=0.
$$
\end{thm}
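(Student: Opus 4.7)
The strategy is to mimic the proof of Theorem 1.1 given in Section 3, but with the new quadratic polynomial $\mathcal{Q}$ replacing $\mathcal{P}$. First I would substitute the Gauss-type identity \eqref{eq3.1} into the definition of $\mathcal{Q}$. After assuming (without loss of generality) that the hyperplane $L$ is spanned by $e_1,\dots,e_{n-1}$ and writing $\mathcal{C}$, $\mathcal{C}(L)$, and $\|H\|^2$ in the components $h_{ij}^{r}$, the polynomial $\mathcal{Q}$ expands into a manifestly non-negative combination of off-diagonal terms $\sum_{r}\bigl[2(n+1)\sum_{i<j\le n-1}(h_{ij}^{r})^{2}+(n+1)\sum_{i=1}^{n-1}(h_{in}^{r})^{2}\bigr]$ plus a quadratic form in the diagonal entries. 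Discarding the off-diagonal contributions yields the bound
\begin{equation*}
\mathcal{Q}\;\geq\;\sum_{r=n+1}^{n+p}\Bigl[n\sum_{i=1}^{n-1}(h_{ii}^{r})^{2}+\tfrac{n-1}{2}(h_{nn}^{r})^{2}-2\!\!\sum_{1\leq i<j\leq n}\!\!h_{ii}^{r}h_{jj}^{r}\Bigr].
\end{equation*}

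The heart of the argument is then to show that the right-hand side is non-negative. For each $r$ I would define the quadratic form $f_{r}$ on $\mathbb{R}^{n}$ equal to the bracketed expression above and minimize it subject to the affine constraint $\digamma:\sum_{i=1}^{n}h_{ii}^{r}=k^{r}$, where $k^{r}$ is the trace of the $r$-th shape operator. Computing the partials $\partial f_{r}/\partial h_{ii}^{r}$ for $i<n$ and $\partial f_{r}/\partial h_{nn}^{r}$ and setting $\mathrm{grad}\,f_{r}$ collinear with $(1,\dots,1)$, a short calculation yields the unique critical point
\begin{equation*}
h_{11}^{r}=h_{22}^{r}=\cdots=h_{n-1,n-1}^{r}=\tfrac{1}{n+1}k^{r},\qquad h_{nn}^{r}=\tfrac{2}{n+1}k^{r},
\end{equation*}
which notably has the diagonal pattern $(\lambda,\dots,\lambda,2\lambda)$ matching the shape-operator form announced in the theorem.

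To confirm this critical point is a global minimum I would invoke Lemma 2.1. Since $\digamma$ is totally geodesic in $\mathbb{R}^{n}$, the bilinear form reduces to $\mathcal{A}(X,X)=\mathrm{Hess}\,f_{r}(X,X)$. On a tangent vector $X\in T_{x}\digamma$, i.e.\ one satisfying $\sum_{i=1}^{n}X_{i}=0$, the Hessian reads
\begin{equation*}
\mathcal{A}(X,X)=2(n+1)\sum_{i=1}^{n-1}X_{i}^{2}+(n+1)X_{n}^{2}-2\Bigl(\sum_{i=1}^{n}X_{i}\Bigr)^{2}=2(n+1)\sum_{i=1}^{n-1}X_{i}^{2}+(n+1)X_{n}^{2}\geq 0,
\end{equation*}
so the critical point is indeed a global minimum. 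Plugging \eqref{eq3.8}-analog back into $f_{r}$ gives $f_{r}=0$, hence $\mathcal{Q}\geq 0$, which rearranged is precisely \eqref{eq4.1}.

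For the equality discussion I would trace back through which inequalities were used. Equality forces all off-diagonal $h_{ij}^{r}$ (both within $L$ and mixed $h_{in}^{r}$) to vanish, which yields triviality of the normal curvature tensor and hence flat normal connection; and the diagonal must sit at the critical point, giving the claimed form of $A_{n+1}$ together with $A_{n+2}=\cdots=A_{n+p}=0$. The only delicate point---the one I regard as the main obstacle---is getting the algebra of the expansion of $\mathcal{Q}$ right so that the coefficient arithmetic cleanly reveals both the non-negative off-diagonal part and the Hessian pattern $(2n\text{ on diag of first }n-1,\, n-1\text{ in the last})$ that ultimately produces $h_{nn}=2\lambda$ rather than $\lambda$; the optimization step itself is then routine.
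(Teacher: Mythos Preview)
Your proposal is correct and follows exactly the approach the paper intends: the paper's own ``proof'' of Theorem~4.1 is merely the remark that it is essentially similar to that of Theorem~1.1, using the polynomial $\mathcal{Q}$ in place of $\mathcal{P}$, and you have carried out precisely this substitution with the correct coefficient arithmetic (the diagonal pattern $n,\ldots,n,\tfrac{n-1}{2}$, the critical point $(\tfrac{k^{r}}{n+1},\ldots,\tfrac{k^{r}}{n+1},\tfrac{2k^{r}}{n+1})$, and the Hessian check all match). The only point glossed over---that after obtaining the diagonal pattern $(\lambda_r,\ldots,\lambda_r,2\lambda_r)$ for every $r$ one must still rotate the normal frame so that $e_{n+1}\parallel H$ to force $\lambda_r=0$ for $r\ge n+2$---is equally glossed over in the paper's treatment of Theorem~1.1.
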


\vskip 0.4 true cm

\vskip 0mm \noindent{{\bf Remark 4.1}}\ \ {\it We omit the proof Theorem 4.1 since it is essentially similar to that of Theorem 1.1.}


\section{Casorati ideal submanifolds in real space forms}
\vskip 0.4 true cm

The notion of ideal immersions was introduced by Chen in the 1990s. Roughly speaking, an ideal immersion of a Riemannian manifold into a real space form is a nice isometric immersion which produces the least possible amount of tension from the ambient space at each point. Chen established many inequalities in terms of $\delta-$invariants and claimed that the submanifold satisfying the equality case is called ideal submanifold. Such submanifolds are also called Chen's submanifolds \cite{BY5}. The ideal submanifolds in real space forms and complex space forms have been characterized by Chen \cite{C2,BYH,CBYDFVL,BY4}. Besides, Einstein, conformally flat, semisymmetric, and Ricci-semisymmetric submanifolds satisfying Chen's inequality in real space forms were studied by Dillen, Petrovic, Verstraelen, \"{O}zg\"{u}r and Tripathi \cite{OCTMM,FML}. More details about ideal submanifolds, we refer to see \cite{BY5}.

Submanifolds for which the equality case of inequalities for the Casorati curvatures, will be called Casorati ideal submanifolds \cite{DS1,DS2}. In \cite{DS1}, the authors proved that

\begin{thm} (\cite{DS1},Corollary 4) The Casorati ideal submanifold with $n\geq 4$ for \eqref{eq1.1} is conformally flat submanifold with trivial normal connection.
\end{thm}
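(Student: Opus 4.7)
The plan is to exploit the explicit classification from Theorem~1.1. Equality in \eqref{eq1.1} forces $M^n$ to be invariantly quasi-umbilical with trivial normal connection, and, with respect to a suitable adapted frame $\{e_1,\ldots,e_n,e_{n+1},\ldots,e_{n+p}\}$, the only non-zero shape operator is the diagonal matrix $A_{n+1}=\mathrm{diag}(2\lambda,\ldots,2\lambda,\lambda)$, while $A_{n+2}=\cdots=A_{n+p}=0$. The flatness of $\nabla^{\perp}$ is already recorded at the end of Section~3, so the substance of the claim is the conformal flatness of the induced metric $g$ on $M^n$.

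First I would substitute these shape operators into the Gauss equation \eqref{eq2.1} to read off all components of $R$ in the adapted frame. Writing $a_1=\cdots=a_{n-1}=2\lambda$, $a_n=\lambda$, so that $h^{n+1}_{ij}=a_i\delta_{ij}$, the calculation collapses to
\begin{equation*}
R_{ijkl}=(\widetilde{c}+a_ia_j)(\delta_{ik}\delta_{jl}-\delta_{il}\delta_{jk}),
\end{equation*}
and the sectional curvatures take only two values: $\widetilde{c}+4\lambda^2$ on $2$-planes spanned by vectors from $\{e_1,\ldots,e_{n-1}\}$ and $\widetilde{c}+2\lambda^2$ on $2$-planes containing $e_n$. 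Equivalently, with $T=e_n$, $\alpha=\widetilde{c}+4\lambda^2$, $\beta=-2\lambda^2$, the curvature tensor has the quasi-constant curvature form
\begin{equation*}
R_{ijkl}=\alpha(g_{ik}g_{jl}-g_{il}g_{jk})+\beta\bigl(g_{ik}T_jT_l-g_{il}T_jT_k+g_{jl}T_iT_k-g_{jk}T_iT_l\bigr).
\end{equation*}

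Next I would contract to obtain the Ricci tensor and scalar curvature — both diagonal in this frame, with at most two distinct Ricci eigenvalues (one of multiplicity $n-1$, the other of multiplicity $1$) — and substitute everything into the Weyl conformal curvature tensor
\begin{equation*}
W_{ijkl}=R_{ijkl}-\tfrac{1}{n-2}\bigl(g_{ik}R_{jl}-g_{il}R_{jk}+g_{jl}R_{ik}-g_{jk}R_{il}\bigr)+\tfrac{\tau'}{(n-1)(n-2)}(g_{ik}g_{jl}-g_{il}g_{jk}).
\end{equation*}
A case split according to how many of the indices $i,j,k,l$ equal $n$ (zero, one, or two) should give $W_{ijkl}\equiv 0$ identically; since $n\geq 4$, the vanishing of the Weyl tensor is equivalent to $(M^n,g)$ being conformally flat, completing the proof.

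The main obstacle I anticipate is the algebraic bookkeeping in the Weyl-tensor computation — in particular verifying the cancellations $W_{ijij}=0$ in both the case $i,j<n$ (where $K=\widetilde{c}+4\lambda^2$) and the case $\{i,j\}\ni n$ (where $K=\widetilde{c}+2\lambda^2$), and confirming that every off-diagonal block vanishes. If that direct route becomes unwieldy, a cleaner alternative is codimension reduction: flatness of $\nabla^{\perp}$ together with $A_{n+2}=\cdots=A_{n+p}=0$ forces the first normal space to be the parallel line field spanned by $e_{n+1}$, so $M^n$ lies inside a totally geodesic $N^{n+1}(\widetilde{c})\subset N^{n+p}(\widetilde{c})$; the resulting hypersurface has a principal curvature of multiplicity $n-1$ at every point, so E.\ Cartan's classical theorem on conformally flat hypersurfaces of space forms ($n\geq 4$) delivers the conclusion without any tensor computation.
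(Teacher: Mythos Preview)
Your proposal is correct, and in fact it supplies strictly more than the paper does: the paper states this result without proof, merely citing \cite{DS1} (Corollary~4). There is therefore no ``paper's own proof'' to compare against.

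Both routes you sketch are valid. The first recognises that the curvature tensor obtained from $A_{n+1}=\mathrm{diag}(2\lambda,\ldots,2\lambda,\lambda)$ via Gauss has the quasi-constant curvature form (Chen--Yano), for which the Weyl tensor vanishes identically; this is a known fact and the case-split you describe will indeed produce $W\equiv 0$ after straightforward algebra. The second route --- codimension reduction (Erbacher's theorem applies since the first normal bundle is the parallel line $\mathbb{R}e_{n+1}$) followed by Cartan--Schouten's characterisation of conformally flat hypersurfaces of space forms as those with a principal curvature of multiplicity $\geq n-1$ --- is cleaner and avoids all component computation. Either argument is complete; the only minor remark is that in the first route you should note that the quasi-constant curvature structure already implies $W=0$ in one line (see e.g.\ Chen--Yano, \emph{Hypersurfaces of a conformally flat space}, 1972), so the index-by-index case split is unnecessary.
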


\begin{thm} (\cite{DS1},Corollary 5) The Casorati ideal submanifold for \eqref{eq1.1} is pseudo-symmetric manifold.
\end{thm}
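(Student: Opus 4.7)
The plan is to use the equality-case description in Theorem 1.1 to write down the intrinsic Riemann tensor of $M$ in closed form, and then to check the Deszcz pseudo-symmetry identity $R\cdot R = L\,Q(g,R)$ component by component in an adapted frame.

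First, by Theorem 1.1 at each point of a Casorati ideal submanifold $M$ there is an adapted orthonormal frame in which $A_{n+1}=\mathrm{diag}(2\lambda,\ldots,2\lambda,\lambda)$, $A_{n+2}=\cdots=A_{n+p}=0$, and the normal connection is flat. Substituting these data into the Gauss equation \eqref{eq2.1}, the only nonzero frame components of $R$ are
\begin{equation*}
R_{ijij}=\widetilde{c}+4\lambda^2\ (1\le i<j\le n-1),\qquad R_{inin}=\widetilde{c}+2\lambda^2\ (1\le i\le n-1),
\end{equation*}
together with their symmetries, while every $R_{ijkl}$ with $\{i,j\}\neq\{k,l\}$ vanishes. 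Thus $R$ carries only two independent sectional-curvature values, with $e_n$ as a distinguished direction; equivalently, $R$ has the form of a quasi-constant curvature tensor.

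Second, recall that the Deszcz pseudo-symmetry identity reads $R\cdot R = L\,Q(g,R)$, where $(R(X,Y)\cdot R)$ is the derivation action of $R(X,Y)$ on the $(0,4)$-tensor $R$ and $Q(g,R)(U,V,Z,W;X,Y)=-((X\wedge Y)\cdot R)(U,V,Z,W)$ with $(X\wedge Y)Z=g(Y,Z)X-g(X,Z)Y$. Because of the sparsity of $R$ just obtained, $R(e_a,e_b)e_c$ is supported only on the single index paired with $\{a,b\}$ through the nonvanishing sectional values, and the same holds for $(e_a\wedge e_b)e_c$. Hence $R\cdot R$ and $Q(g,R)$ are $(0,6)$-tensors with identical support. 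Evaluating both at a frame sextuple and sorting by how many of the indices equal the distinguished $n$, one finds that on every nonzero configuration the two tensors differ by the single scalar $L=\widetilde{c}+2\lambda^2$, which is precisely the sectional curvature of any $2$-plane containing $e_n$.

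The main obstacle is bookkeeping: enumerating the inequivalent frame-index patterns and checking proportionality with the same $L$ in each. What keeps this manageable is the sparsity observed above, which reduces the identity to a short finite list of numerical equalities in $\widetilde{c}$ and $\lambda$, verifiable by inspection. This gives pseudo-symmetry of $M$ with Deszcz function $L=\widetilde{c}+2\lambda^2$, proving the theorem.
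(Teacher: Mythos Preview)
The paper does not supply its own proof of this statement: Theorem 5.2 is quoted verbatim from \cite{DS1} (there as Corollary~5) and is listed among known facts, so there is no argument in the present paper to compare against.

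Your route is the standard one and is correct in outline. After the Gauss equation you correctly obtain
\[
R_{ijij}=\widetilde{c}+4\lambda^{2}\ (i,j<n),\qquad R_{inin}=\widetilde{c}+2\lambda^{2}\ (i<n),
\]
with all mixed components $R_{ijkl}$, $\{i,j\}\neq\{k,l\}$, vanishing. This is exactly the algebraic form of a curvature tensor of \emph{quasi-constant curvature} with distinguished direction $e_n$, and it is a classical fact (going back to Deszcz and collaborators) that every Riemannian manifold of quasi-constant curvature is Deszcz pseudo-symmetric, with the associated function $L$ equal to the sectional curvature of any $2$-plane containing the distinguished direction; in your notation $L=\widetilde{c}+2\lambda^{2}$. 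That is precisely what you assert. Note also that $\lambda^{2}=\tfrac{n^{2}}{(2n-1)^{2}}\lVert H\rVert^{2}$ is a smooth function on $M$, so $L$ is globally well defined regardless of how the adapted frame is chosen pointwise.

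One comment: the only place your write-up is thin is the closing sentence, where the actual component check of $R\cdot R=L\,Q(g,R)$ is asserted (``verifiable by inspection'') rather than displayed. If you want the argument to be self-contained, either (i) cite the quasi-constant curvature $\Rightarrow$ pseudo-symmetric implication directly, or (ii) carry out the single nontrivial case explicitly, e.g.\ evaluate both $(R\cdot R)(e_i,e_j,e_j,e_n;e_i,e_n)$ and $Q(g,R)(e_i,e_j,e_j,e_n;e_i,e_n)$ for distinct $i,j<n$ and observe that their ratio is $\widetilde{c}+2\lambda^{2}$; all other index patterns either vanish on both sides or reduce to this one by symmetry. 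With that one computation displayed, the proof is complete.
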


In this paper, we have
\begin{thm} The Casorati ideal submanifold for \eqref{eq1.1} and \eqref{eq4.1} are Einstein if and only if they are totally geodesic submanifolds.
\end{thm}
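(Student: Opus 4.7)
My plan is to feed the equality-case shape operators from Theorems 1.1 and 4.1 directly into the Gauss equation and then impose the Einstein condition, which should collapse to a linear equation in $\lambda^{2}$ that forces $\lambda=0$ whenever $n\ge 3$.

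First, I would recall from the equality statements that if $M^{n}$ is Casorati ideal for either \eqref{eq1.1} or \eqref{eq4.1}, then with respect to a suitable local orthonormal frame the second fundamental form satisfies $h_{ij}^{r}=0$ for $i\ne j$ and for all $r$, only $A_{n+1}$ is nonzero, and its eigenvalues are $(2\lambda,\dots,2\lambda,\lambda)$ in the case of \eqref{eq1.1} or $(\lambda,\dots,\lambda,2\lambda)$ in the case of \eqref{eq4.1}. Since the normal connection is trivial, I can work entirely with this single shape operator. Remark 2.1 already excludes $n=2$, so I may assume $n\ge 3$ throughout.

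Next, by the Gauss equation \eqref{eq2.1}, because $h$ is diagonal,
\begin{equation*}
R_{ijij}=\widetilde{c}+h_{ii}^{n+1}h_{jj}^{n+1},\qquad i\ne j,
\end{equation*}
so the Ricci curvatures are
\begin{equation*}
\mathrm{Ric}(e_{i},e_{i})=(n-1)\widetilde{c}+h_{ii}^{n+1}\sum_{j\ne i}h_{jj}^{n+1}.
\end{equation*}
I would then compute $\mathrm{Ric}(e_{i},e_{i})$ for $i\le n-1$ and for $i=n$ separately. For the eigenvalue pattern of \eqref{eq1.1} the two expressions reduce to $(n-1)\widetilde{c}+(4n-6)\lambda^{2}$ and $(n-1)\widetilde{c}+2(n-1)\lambda^{2}$, while for the pattern of \eqref{eq4.1} they reduce to $(n-1)\widetilde{c}+n\lambda^{2}$ and $(n-1)\widetilde{c}+2(n-1)\lambda^{2}$. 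Imposing the Einstein condition $\mathrm{Ric}(e_{i},e_{i})=\mathrm{Ric}(e_{n},e_{n})$ in either case yields $(2n-4)\lambda^{2}=0$ or $(n-2)\lambda^{2}=0$, respectively, and since $n\ge 3$ both force $\lambda=0$, i.e.\ $h\equiv 0$.

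The converse is immediate: if $M^{n}$ is totally geodesic in $N^{n+p}(\widetilde{c})$, then the Gauss equation gives $R_{ijij}=\widetilde{c}$, so $\mathrm{Ric}=(n-1)\widetilde{c}\,g$ and $M$ is Einstein; and a totally geodesic submanifold trivially saturates both \eqref{eq1.1} and \eqref{eq4.1} (both sides vanish after one subtracts $\widetilde{c}$, since $\mathcal{C}=0$ forces $\rho=\widetilde{c}$ by \eqref{eq3.1}). I do not anticipate a serious obstacle here: the only mildly delicate point is bookkeeping the two different diagonal patterns in parallel and verifying that the coefficient of $\lambda^{2}$ is nonzero for $n\ge 3$ in each case, which is what makes the argument genuinely rule out nontrivial ideal Einstein examples rather than merely restrict them.
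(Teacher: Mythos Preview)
Your proof is correct and follows essentially the same approach as the paper: feed the equality-case diagonal shape operator into the Gauss equation, compute the two distinct Ricci eigenvalues, and equate them to force $\lambda=0$. The paper's version is terser (it treats only the case \eqref{eq1.1} explicitly and omits the converse), but your added detail---handling both patterns, the explicit $n\ge 3$ justification, and the converse direction---fills in exactly what the paper leaves implicit.
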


\begin{proof} Just take Casorati ideal submanifold for \eqref{eq1.1} as an example. The equation of Gauss gives
$$
Ric(e_i)=(n-1)\widetilde{c}+2(2n-3)a^2,\ \ i=1,2,\cdots,n-1,
$$
$$
Ric(e_n)=(n-1)\widetilde{c}+2(n-1)a^2.
$$
As $M^n$ is Einstein, $Ric(e_i)=Ric(e_n), \ i=1,2,\cdots,n-1$. Thus $a=0$, which implies $h=0$.
\end{proof}

\vskip 0mm \noindent{{\bf Theorem 5.4.}}\ \ \textit{The Casorati ideal hypersurface $M^3$ for \eqref{eq4.1} in Euclidean 4-space $\mathbb{E}^4$  is congruent to}
$$(\frac{1}{a}\textmd{sd}(at,\frac{1}{\sqrt{2}})\textmd{sin}u,\frac{1}{a}\textmd{sd}(at,\frac{1}{\sqrt{2}})\textmd{cos}u\textmd{sin}v,
\frac{1}{a}\textmd{sd}(at,\frac{1}{\sqrt{2}})\textmd{cos} u \textmd{cos} v,
\frac{1}{2}\int_{0}^t \textmd{sd}^2(at,\frac{1}{\sqrt{2}})dt)$$
{\it for some positive real number $a$, where} $\textmd{sd}(,)$ {\it denotes Jacobi's elliptic function.}

\vskip 0.2 true cm

\vskip 0mm \noindent{{\bf Remark 5.1.}}\ \ \ {\it More details of Jacobi's elliptic functions can be found in} \cite{BY4,FB}.

\vskip 0.2 true cm

\begin{proof}
Theorem 4.1 implies that there exits an orthonormal frame $\{e_1,e_2,e_3,e_4\}$ such that the shape operator of $M^3$ with respect to this frame takes the following simple form:
\begin{equation} \label{eq5.1}
A=\left(
  \begin{array}{cccccc}
   \lambda  & 0 & 0 \\
   0  & \lambda & 0 \\
   0  & 0 & 2\lambda  \\
  \end{array}
\right).
\end{equation}

Clearly, \eqref{eq5.1} is a special case of (4.4) in \cite{BY4}. Then from Theorem 4.1 in \cite{BY4}, we derive the conclusion.
\end{proof}

Recall that an isometric immersion of a Riemannian $n-$manifold into a Euclidean $m-$space is called rigid if the isometric immersion is unique up to isometries of $\mathbb{E}^m$. From Theorem 4.2 in \cite{BY4} and Theorem 5.4 we have

\begin{thm}
The Casorati ideal hypersurface $M^3$ for \eqref{eq4.1} in Euclidean 4-space $\mathbb{E}^4$ is rigid.
\end{thm}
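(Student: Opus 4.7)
The plan is to deduce Theorem 5.5 as an immediate consequence of Theorem 5.4 together with Chen's rigidity result (Theorem 4.2 in \cite{BY4}), following exactly the route signposted in the sentence preceding the statement. So the proof will not be a fresh construction but rather a verification that the hypersurface produced in Theorem 5.4 satisfies the hypotheses of Chen's rigidity criterion.

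First I would invoke Theorem 4.1 to recall that on any Casorati ideal hypersurface $M^3$ of $\mathbb{E}^4$ for \eqref{eq4.1}, there is an orthonormal frame with respect to which the shape operator has the diagonal form \eqref{eq5.1}, i.e.\ principal curvatures $\lambda,\lambda,2\lambda$, with one of them repeated with multiplicity two. In the notation of \cite{BY4}, this fits the pattern (4.4) with the two free parameters there specialized appropriately (one principal curvature equal to twice the other, the other with multiplicity $n-1=2$). Thus Theorem 5.4 produced not just some hypersurface with this shape operator but, via Chen's Theorem 4.1 in \cite{BY4}, the unique explicit parametrization in terms of Jacobi's sd function, depending only on the one real scale parameter $a>0$.

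Next I would apply Theorem 4.2 of \cite{BY4}, which asserts that hypersurfaces of Euclidean space with shape operator of the form (4.4) are rigid; that is, any two isometric immersions of the underlying Riemannian $3$-manifold into $\mathbb{E}^4$ whose shape operators take the normal form \eqref{eq5.1} differ only by an isometry of the ambient $\mathbb{E}^4$. Since Theorem 5.4 pins down the immersion up to the single parameter $a$, and that parameter is an intrinsic invariant of the induced metric (it may for instance be read off from the value of $\lambda$ at any fixed point, which is determined by intrinsic data through the equality in \eqref{eq4.1} combined with the Gauss equation), any two Casorati ideal immersions of the same $M^3$ must correspond to the same $a$ and hence are congruent.

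The main obstacle, and essentially the only nontrivial point, is checking that the shape operator form \eqref{eq5.1} falls within the hypothesis class of \cite{BY4}'s Theorem 4.2 rather than merely within the weaker existence statement (4.4). Once this matching is verified, rigidity is immediate: the Gauss and Codazzi equations with the prescribed principal curvature structure admit, up to rigid motion, only the one-parameter family described in Theorem 5.4, and the parameter itself is intrinsically determined. No further computation is needed, which is precisely why the result is stated as a corollary of Theorem 5.4.
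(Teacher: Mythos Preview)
Your proposal is correct and follows essentially the same route as the paper: the authors also derive Theorem 5.5 directly from Theorem 5.4 together with Theorem 4.2 of \cite{BY4}, with no additional argument. Your write-up is in fact more detailed than theirs, since you spell out why the shape operator \eqref{eq5.1} fits the hypothesis class of \cite{BY4} and why the parameter $a$ is intrinsically determined; the paper simply states the implication without further justification.
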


\vskip 0.4 true cm

{\bf Acknowledgements} We would like to thank Professors Teodor Oprea, Jaewon Lee and Mukut Mani Tripathi for the disscussion held on this topic. More over, we are thankful to the referees for their valuable comments and suggestions which improved the paper.
\vskip 0.4 true cm

\vskip 0.5 true cm


\bigskip
\bigskip

{\footnotesize \pn{\it Pan Zhang, Liang Zhang}\; \\
{School of Mathematics and Computer Science}\\
{Anhui Normal University}\\
{Anhui 241000}\\
{P. R. China}\\
{Email: 656257701@qq.com(P. Zhang); zhliang43@163.com(L. Zhang)}


\begin{thebibliography}{20}

\bibitem{AM}
A. Mihai, I.N. Radulescu, {\it Scalar and Ricci curvatures of special contact slant submanifolds in Sasakian space forms}, Advances in Geometry {\bf14}(2014), 147--159.

\bibitem{C1}
B.-Y. Chen, \textit{Mean curvature and shape operator of isometric immersions in real space forms}, Glasgow Math. J. {\bf 38}(1996), 87--97.

\bibitem{C2}
B.-Y. Chen, F. Dillen, L. Verstraelen, L. Vrancken, \textit{Totally real submanifolds of $\mathbb{C}P^n$ satisfying a basic equality}, Arch. Math. {\bf 63}(1994), 553--564.

\bibitem{CBYDF}
B.-Y. Chen, F. Dillen, \textit{Optimal general inequalities for Lagrangian submanifolds in complex space forms}, J. Math. Anal. Appl. {\bf 379}(2011), 229--239.

\bibitem{BYH}
B.-Y. Chen, H. Yildirim, \textit{Classification of ideal submanifolds of real space forms with type number $\leq 2$}, arXiv preprint arXiv:1401.2565, 2014.

\bibitem{CBYDFVL}
B.-Y. Chen, F. Dillen, L. Vrancken, \textit{Lagrangian submanifolds in complex space forms attaining equality in a basic inequality}, J. Math. Anal. Appl. {\bf 387}(2012), 139--152.

\bibitem{BY4}
B.-Y. Chen, \textit{On ideal hypersurfaces of Euclidean 4-space}, Arab. J. Math. Sci. {\bf 19}(2013), 129--144.

\bibitem{BY5}
B.-Y. Chen, \textit{Pseudo-Riemannian Geometry, $\delta-$Invariants and Application}, World Scientic, New Jersey, 2011.

\bibitem{OCTMM}
C. \"{O}zg\"{u}r, M.M. Tripathi, \textit{On submanifolds satisfying Chen's equality in a real space form}, Arab. J. Sci. Eng. Sect. A Sci  {\bf 33}(2008),  321--330.

\bibitem{LYL}
C.W. Lee, D.W. Yoon, J.W. Lee, {\it Optimal inequalities for the Casorati curvatures of submanifolds of real space forms endowed with semi-symmetric metric connections}, J. Inequal. Appl. (2014),  Article ID 327.

\bibitem{FB}
F. Bowman, {\it Introduction to elliptic functions with applications}, John Wiley Sons Inc., New York--London--Sydney, 1961.

\bibitem{FML}
F. Dillen, M. Petrovic, L. Verstraelen, \textit{Einstein, conformally flat and semi-symmetric submanifolds satisfying Chen's equality}, Israel J. Math. {\bf 100}(1997), 163--169.

\bibitem{FR}
F.R. Al-Solamy, B.-Y. Chen, S. Deshmukh, \textit{Two optimal inequalities for anti-holomorphic submanifolds and their applications}, Taiwan. J.  Math. {\bf 18}(2014): 199--217.

\bibitem{CAS}
F. Casorati,  \textit{Mesure de la courbure des surfaces suivant l¡¯idee commune. Ses rapports avec les mesures de courbure
gaussienne et moyenne}, Acta. Math. {\bf 14}(1890), 95--110.

\bibitem{JGE}
J.W. Lee, G.E. V{\i}lcu, \textit{Inequalities for generalized normalized $\delta$-Casorati curvatures of slant submanifolds in quaternionic space forms}, Taiwan. J. Math., doi: 10.11650/tjm.18.2014.4832.

\bibitem{P3}
P. Zhang, L. Zhang, {\it Remarks on inequalities for the Casorati curvatures of slant submanifolds in
quaternionic space forms}, J. Inequal. Appl. (2014), Article ID 452.

\bibitem{PLW}
P. Zhang, L. Zhang, W. Song, \textit{Chen's inequalities for submanifolds of a Riemannian manifold of quasi-constant curvature
with a semi-symmetric metric connection}, Taiwan. J. Math. {\bf18}(2014), 1841-1862.

\bibitem{DS1}
S. Decu, S. Haesen, L. Verstraelen, \textit{Optimal inequalities involving Casorati curvatures}, Bull. Transylv. Univ. Brasov, Ser. B
{\bf 14}(2007), 85--93 .

\bibitem{DS2}
S. Decu, S. Haesen, L. Verstraelen, \textit{Optimal inequalities characterising quasi-umbilical submanifolds}, J. Inequal. Pure.
Appl. Math. {\bf 9}(2008),  Article ID 79.


\bibitem{UMW}
S. Uddin, A. Mustafa, B. R. Wong and C. Ozel,  \textit{A geometric inequality for warped product semi-slant submanifolds of nearly cosymplectic manifolds}, Revista de la Union Math. Argentina {\bf 55}(2014), 55--69.

\bibitem{O1}
T. Oprea,  \textit{Optimization methods on Riemannian submanifolds}, Anal. Univ. Bucuresti Mat. Anul. {\bf 54}(2005), 127--136.

\bibitem{O2}
T. Oprea, \textit{Chen's inequality in the Lagrangian case}, Colloq. Math. {\bf 108}(2007), 163--169.

\bibitem{VG}
V. Ghisoiu,  \textit{Inequalities for the Casorati curvatures of slant submanifolds in complex space forms}, In: Riemannian
Geometry and Applications. Proceedings RIGA 2011, pp. 145-150. Ed. Univ. Bucuresti, Bucharest (2011).

\bibitem{VBGE}
V. Slesar, B. Sahin, G.E. V{\i}lcu,  \textit{Inequalities for the Casorati curvatures of slant submanifolds in quaternionic space forms}, J. Inequal. Appl. (2014), Article ID 123.


\end{thebibliography}
\end{document}